\documentclass{article}
\usepackage{graphicx}
\usepackage{amsmath}
\usepackage{amsfonts}
\usepackage{amsthm}
\usepackage{amssymb}
\usepackage[hidelinks]{hyperref}
\usepackage[affil-it]{authblk}
\usepackage{hyperref}
\newcommand{\tarc}{\mbox{\large$\frown$}}
\newcommand{\arc}[1]{\stackrel{\tarc}{#1}}

\newtheorem{theorem}{Theorem}[section]
\newtheorem{lemma}[theorem]{Lemma}

\newtheorem{prop}[theorem]{Property}

\newtheorem{defn}[theorem]{Definition} 
\newtheorem{corollary}[theorem]{Corollary} 

\begin{document}

\title{Modified contact simplex iteration}
\author{Sergei Drozdov\footnote{email: \href{mailto:smdrozdov@gmail.com}{smdrozdov@gmail.com} }}
\maketitle
\begin{abstract}
We study the iterations of the procedure of taking the contact simplex.
We define the concept of the root of the simplex, which is a homothety image of contact simplex with a special coefficient greater than 1.
The article shows that once we iterate the root of a given simplex, the circumcenter sequence of these simplices has two partial limits.
\end{abstract}

\section{Introduction}
Consider triangle $S,$ and its inscribed circle $C.$ Then the triangle formed by the points of intersection of $S$ and $C$ is called the \textbf{contact triangle} (also known as Gergonne triangle or intouch triangle) of $S$. If we iterate the procedure of taking the contact triangle, the sequence obtained will converge to the equilateral triangle (if the proper normalisation is applied). This follows, for example, from the fact that on each step we switch from the triangle with the angles
$\left(\frac{\pi}{3}+\alpha,\frac{\pi}{3}+\beta,\frac{\pi}{3}+\gamma\right)$ to the triangle with the angles
$\left(\frac{\pi}{3}-\frac{\alpha}{2},\frac{\pi}{3}-\frac{\beta}{2},\frac{\pi}{3}-\frac{\gamma}{2}\right)$.
\begin{figure}[htp]
    \centering
    \includegraphics[width=12cm]{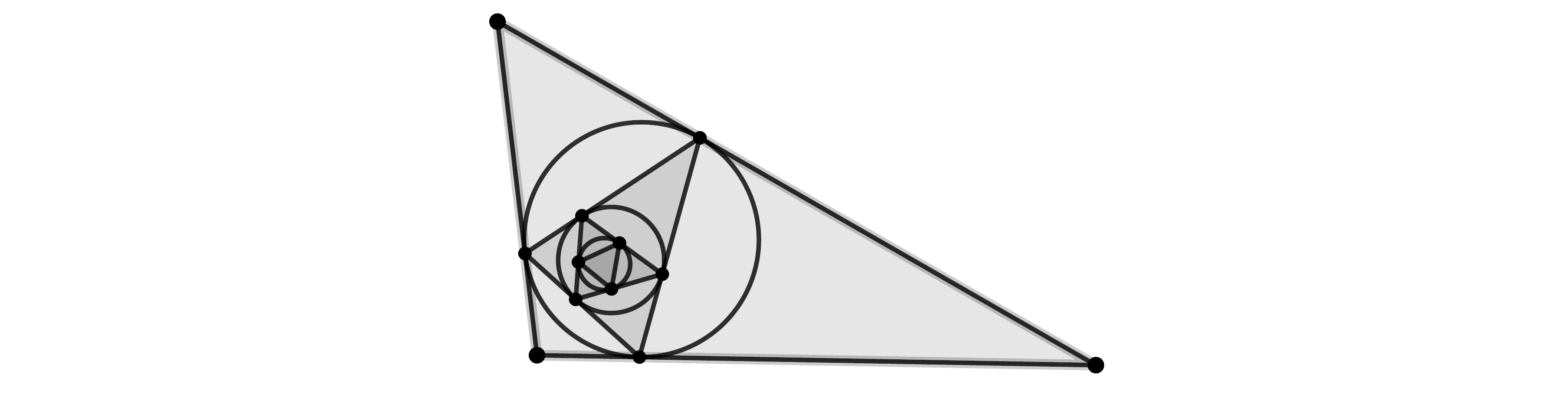}
    \caption{Contact triangle iteration.}
    \label{fig:root-definition}
\end{figure}

The same procedure for the euclidian simplex of the higher dimension is discussed in \cite{Fedja}, and for the tetrahedrons in \cite{Krizek}. As shown in these sources, the sequence of simplices does not have a unique limit, but instead, the iterations of the odd and even orders have two limits, that are, generally speaking, different.

We modify the procedure slightly, and instead of normalizing contact simplices, let them grow (in a special way). We prove the technical Lemma \ref{lemma_container} and afterward the Theorem \ref{main_theorem}, which shows that the circumcenter sequence of these simplices has two partial limits.
 
\section{Root of the simplex definition}

\begin{defn} \textbf{Root.} Let $S \subset \mathbb{R}^n$ be a nondegenerate simplex with $n+1$ vertex. Denote $S$ by $A_1 A_2 \dots A_{n+1}.$  Let $I$ and $r$ be the center and the radius of inscribed sphere of $S$. Let the full-dimension face opposite to $A_i$ be $\Delta_i$. $\forall i$ the inscribed sphere touches $\Delta_i$ at some point $B_i.$
Let the radius of circumscribed sphere of $S$ be $R,$ and consider space transformation $K$, which is the composition of the inversion with radius $R$ and center $I$ and a central symmetry, centered at point $I.$
Then, the simplex $K(B_1)K(B_2) \dots K(B_{n+1})$ is called the \textbf{root} of the simplex $S.$ See Figure \ref{fig:root-definition}.
\label{def_root}
\end{defn}
Denote it as $Root(S)$.

\begin{figure}[htp]
    \centering
    \includegraphics[width=12cm]{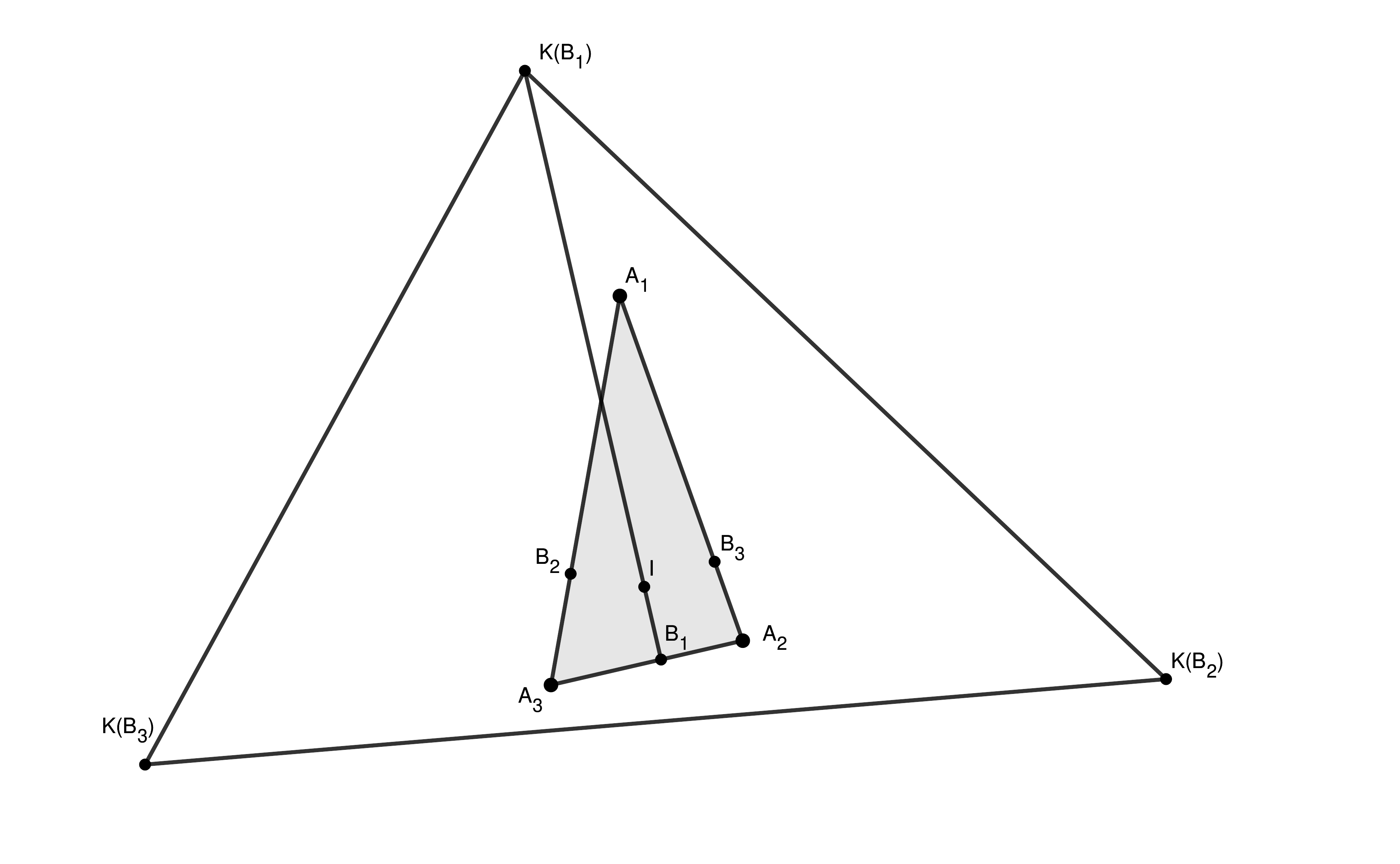}
    \caption{$Root(\triangle A_1A_2A_3)=\triangle K(B_1) K(B_2) K(B_3)$.}
    \label{fig:root-definition}
\end{figure}

\begin{prop}
Let $S \subset \mathbb{R}^n$ be a nondegenerate simplex and $T = Root(S).$ Then $T$ is homothetic to the contact simplex of $S.$  And also, $I$ and $\frac{R^2}{r}$ are the center and the radius of the circumscribed sphere of $T.$
\label{prop_radii}
\end{prop}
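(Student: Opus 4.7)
The plan is to unfold the definition of $K$ and observe that, restricted to the inscribed sphere of $S$, the transformation acts as an honest homothety centered at $I$. Both assertions of the proposition will then follow at once.

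First, I would write the two ingredients of $K$ in closed form. Inversion with center $I$ and radius $R$ sends a point $P$ to $I + \frac{R^2}{|P-I|^2}(P-I)$, and the central symmetry at $I$ sends $Q$ to $2I - Q$. Composing them yields
\[
K(P) \;=\; I - \frac{R^2}{|P-I|^2}(P-I).
\]
I would then specialize to the contact points. Every $B_i$ lies on the inscribed sphere of $S$, so $|B_i - I| = r$, and the formula collapses to $K(B_i) = I - \frac{R^2}{r^2}(B_i - I)$. This is precisely the image of $B_i$ under the homothety centered at $I$ with coefficient $-R^2/r^2$.

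This immediately gives both claims. The simplex $T = K(B_1)\dots K(B_{n+1})$ is the image of the contact simplex $B_1\dots B_{n+1}$ under a homothety, which is the first assertion; and since every vertex of $T$ now lies at distance $R^2/r$ from $I$, while $T$ is by definition a genuine (nondegenerate) simplex, the sphere of radius $R^2/r$ centered at $I$ is the unique sphere through its $n+1$ vertices and is therefore its circumscribed sphere.

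I do not expect a real obstacle here; the proof is essentially a one-line computation. The only thing to watch is the sign coming from the central symmetry, which flips the inversion ratio from $R^2/r^2$ to $-R^2/r^2$. This does not affect the radius $R^2/r$, and a negative coefficient is still a homothety, so neither conclusion is disturbed.
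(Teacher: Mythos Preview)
Your proof is correct and follows essentially the same approach as the paper: both observe that on the inscribed sphere the map $K$ acts as the homothety centered at $I$ with ratio $-R^2/r^2$, from which the two assertions follow immediately. Your version is slightly more explicit in writing out the formula for $K$, but the argument is the same.
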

\begin{proof}
In the notation of the Definition \ref{def_root}  the center and the radius of the circumscribed sphere of the simplex 
$B_1\dots B_{n+1}$ are $I$ and $r.$
Simplex $B_1\dots B_{n+1}$ is the contact simplex of $A_1\dots A_{n+1}.$
The pointwise inversion image of $B_1\dots B_{n+1}$ is homothetic to $B_1\dots B_{n+1}$ with the coefficient $\frac{R^2}{r^2}$.
If the apply central symmetry, centered at point $I,$ the resulting image is homothetic to $B_1\dots B_{n+1}$ with the coefficient $-\frac{R^2}{r^2}$.
Thus, simplex $T$ has the circumscribed sphere with the same center $I$ and with the radius equal $r \cdot \frac{R^2}{r^2}=\frac{R^2}{r}.$
\end{proof}

We will further denote simplex $T$ by $C_1 \dots C_{n+1}.$
\begin{prop}
In the notation of the Definition \ref{def_root}, $\forall i \neq j : 1 \leqslant  i, j \leqslant n + 1$
\begin{equation}
\overrightarrow{IC_i} \cdot \overrightarrow{IA_j} = -R^2
\end{equation}
\label{prop_gmt}
\end{prop}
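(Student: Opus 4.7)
The plan is to evaluate the dot product by first rewriting $\overrightarrow{IC_i}$ in a convenient form using the definition of $K$, and then computing $\overrightarrow{IB_i}\cdot\overrightarrow{IA_j}$ directly from the geometry of the tangent point $B_i$.

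First I would unpack the map $K$. Since $K$ is the composition of inversion in the sphere of radius $R$ centered at $I$ with central symmetry at $I$, for any point $X\neq I$ we have
\[
\overrightarrow{IK(X)} = -\frac{R^{2}}{|IX|^{2}}\,\overrightarrow{IX}.
\]
Applied to $X=B_i$ (with $|IB_i|=r$), this gives
\[
\overrightarrow{IC_i} = -\frac{R^{2}}{r^{2}}\,\overrightarrow{IB_i}.
\]

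Next I would use the defining property of $B_i$. Because $B_i$ is the point where the inscribed sphere touches the face $\Delta_i$, the vector $\overrightarrow{IB_i}$ is orthogonal to the hyperplane spanned by $\Delta_i$, and its length is $r$. The assumption $j\neq i$ is what makes the problem tractable: by definition $\Delta_i$ is the facet opposite $A_i$, so for $j\neq i$ the vertex $A_j$ lies in $\Delta_i$. Hence the vector $\overrightarrow{B_iA_j}$ is parallel to $\Delta_i$, and therefore perpendicular to $\overrightarrow{IB_i}$. Writing $\overrightarrow{IA_j}=\overrightarrow{IB_i}+\overrightarrow{B_iA_j}$ and taking the dot product with $\overrightarrow{IB_i}$, the second term vanishes, yielding
\[
\overrightarrow{IB_i}\cdot\overrightarrow{IA_j} = |\overrightarrow{IB_i}|^{2} = r^{2}.
\]

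Combining the two computations gives
\[
\overrightarrow{IC_i}\cdot\overrightarrow{IA_j}
= -\frac{R^{2}}{r^{2}}\,\overrightarrow{IB_i}\cdot\overrightarrow{IA_j}
= -\frac{R^{2}}{r^{2}}\cdot r^{2} = -R^{2},
\]
as claimed. There is no real obstacle here; the only subtlety is recognizing that the hypothesis $i\neq j$ is precisely what places $A_j$ in the face orthogonal to $\overrightarrow{IB_i}$, which collapses the dot product to $r^{2}$ and lets the factor $R^{2}/r^{2}$ from $K$ cancel cleanly against it.
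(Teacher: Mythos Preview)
Your proof is correct and follows essentially the same route as the paper: you express $\overrightarrow{IC_i}=-\frac{R^{2}}{r^{2}}\overrightarrow{IB_i}$ from the definition of $K$, then use $IB_i\perp B_iA_j$ (valid precisely because $j\neq i$ places $A_j$ on the face $\Delta_i$) to reduce $\overrightarrow{IB_i}\cdot\overrightarrow{IA_j}$ to $r^{2}$. Your write-up is in fact a bit more explicit than the paper's about why the hypothesis $i\neq j$ is needed.
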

\begin{proof}
\begin{equation}
\overrightarrow{IC_i} \cdot \overrightarrow{IA_j} =
-\frac{R^2}{r^2} \cdot  \overrightarrow{IB_i} \cdot \overrightarrow{IA_j} ,
\end{equation}
and since $I$ is the center of inscribed sphere of $S$, and so $IB_i \perp B_i A_j,$ we can continue:
\begin{equation}
-\frac{R^2}{r^2} \cdot  \overrightarrow{IB_i} \cdot \overrightarrow{IA_j}=
-\frac{R^2}{r^2} \cdot  IB_i ^ 2 = 
-\frac{R^2}{r^2} \cdot r^2 = -R^2
\end{equation}
\end{proof}

\begin{prop}
$I \in Int (T)$
\end{prop}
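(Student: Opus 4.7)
The plan is to reduce the claim to a statement about the contact simplex $B_1\cdots B_{n+1}$ and then apply the standard identity $\sum_{i=1}^{n+1} F_i\vec n_i = 0$, where $\vec n_i$ is the outward unit normal to the face $\Delta_i$ of $S$ and $F_i=\mathrm{vol}_{n-1}(\Delta_i)$; this identity follows, for instance, from the divergence theorem applied to a constant vector field on $S$.

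By Proposition \ref{prop_radii}, $T = C_1\cdots C_{n+1}$ is the image of $B_1\cdots B_{n+1}$ under the homothety centered at $I$ with ratio $-R^2/r^2$. A nonzero-ratio homothety fixes its center and maps interiors onto interiors, so it suffices to prove $I\in\mathrm{Int}(B_1\cdots B_{n+1})$. Since $IB_i\perp\Delta_i$, $|IB_i|=r$, and $I$ lies strictly inside $S$, I have $\overrightarrow{IB_i}=r\vec n_i$. Feeding this into the identity above and rearranging yields
\[
I = \sum_{i=1}^{n+1}\lambda_i B_i,\qquad \lambda_i = \frac{F_i}{\sum_j F_j}>0,
\]
a strict positive convex combination of the vertices of the contact simplex.

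To conclude that $I$ lies in the interior (not merely the relative interior) of $\conv(B_1,\ldots,B_{n+1})$, I verify that the contact simplex is nondegenerate: any affine relation $\sum c_i B_i = 0$ with $\sum c_i = 0$ transforms into a linear relation $\sum c_i\vec n_i = 0$; but $\vec n_1,\ldots,\vec n_{n+1}$ span $\mathbb R^n$ (as $S$ is nondegenerate), so their only linear dependence up to scale is $\sum F_i\vec n_i = 0$, and the constraint $\sum c_i = 0$ combined with $\sum F_i > 0$ forces $c_i\equiv 0$. I do not foresee any serious obstacle beyond correctly invoking the normal-sum identity; the rest is routine convex geometry.
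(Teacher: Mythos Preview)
Your argument is correct. Both you and the paper begin with the same reduction: since $T$ is the image of the contact simplex $B_1\cdots B_{n+1}$ under a homothety centered at $I$, it suffices to show that $I$ lies in the interior of the contact simplex. From there the approaches diverge. The paper simply cites an external result (Theorem~3 of \cite{Krizek2}) asserting that the circumcenter of the contact simplex of any nondegenerate simplex is interior to it. You instead give a self-contained proof: the Minkowski identity $\sum_i F_i\vec n_i=0$ together with $\overrightarrow{IB_i}=r\vec n_i$ exhibits $I$ as the strictly positive barycentric combination $I=\sum_i\bigl(F_i/\sum_jF_j\bigr)B_i$, and your nondegeneracy check (the only linear relation among the $\vec n_i$ is, up to scale, the Minkowski one, which has nonzero coefficient sum) confirms that the $B_i$ are affinely independent. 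Your route is more informative---it identifies $I$ explicitly in barycentric coordinates and removes the dependence on \cite{Krizek2}---at the modest cost of a few extra lines.
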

\begin{proof}
As it is shown in \cite{Krizek2}, in Theorem 3, the circumcenter of simplex $B_1 \dots B_{n+1}$ is an interior point of $B_1 \dots B_{n+1}.$
So, since $T$ is homothetic to $B_1 \dots B_{n+1},$ the circumcenter of $T$ is an interior point of $T.$
\end{proof}

\section{Root of the simplex contains the simplex circumscribed sphere}
\begin{lemma}
Let $S \subset \mathbb{R}^n$ be the simplex, and $T=Root(S).$ Then $T$ contains the circumscribed sphere of $S.$
\label{lemma_container}
\end{lemma}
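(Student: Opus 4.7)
The plan is to use Proposition \ref{prop_gmt} to read off explicit equations for the $n+1$ face hyperplanes of $T$, and then reduce the containment to a single elementary inequality on the circumsphere of $S$.

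First, I would fix $j$ in the identity $\overrightarrow{IC_i}\cdot\overrightarrow{IA_j}=-R^2$ of Proposition \ref{prop_gmt} and let $i$ range over the other $n$ indices. This exhibits the $n$ vertices $\{C_i:i\neq j\}$ as lying on the single affine hyperplane
$$H_j=\{P:\overrightarrow{IP}\cdot\overrightarrow{IA_j}=-R^2\},$$
so $H_j$ is exactly the face of $T$ opposite $C_j$. Because $I\in\mathrm{Int}(T)$ and $\overrightarrow{II}\cdot\overrightarrow{IA_j}=0>-R^2$, the closed half-space of $H_j$ containing $T$ is $\{P:\overrightarrow{IP}\cdot\overrightarrow{IA_j}\geq -R^2\}$. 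Hence
$$T=\bigcap_{j=1}^{n+1}\{P:\overrightarrow{IP}\cdot\overrightarrow{IA_j}\geq -R^2\}.$$

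Next, by convexity of $T$, the closed ball bounded by the circumsphere of $S$ is the convex hull of that sphere, so it suffices to show every point $P$ of the circumsphere satisfies $\overrightarrow{IP}\cdot\overrightarrow{IA_j}\geq -R^2$ for each $j$. Writing $\overrightarrow{IP}=\overrightarrow{IO}+\overrightarrow{OP}$ where $O$ is the circumcenter of $S$ and $|\overrightarrow{OP}|=R$, the minimum of $\overrightarrow{IP}\cdot\overrightarrow{IA_j}$ over the sphere is $\overrightarrow{IO}\cdot\overrightarrow{IA_j}-R\,|\overrightarrow{IA_j}|$, attained when $\overrightarrow{OP}$ is antiparallel to $\overrightarrow{IA_j}$. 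It remains to establish
$$\overrightarrow{IO}\cdot\overrightarrow{IA_j}+R^2\geq R\,|\overrightarrow{IA_j}|.$$

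For the last step I would expand $\overrightarrow{IA_j}=\overrightarrow{IO}+\overrightarrow{OA_j}$ with $|\overrightarrow{OA_j}|=R$; dotting with $\overrightarrow{IO}$ and solving yields $\overrightarrow{IO}\cdot\overrightarrow{IA_j}=\tfrac12(|IO|^2+|IA_j|^2-R^2)$. Substitution reduces the desired inequality to
$$|IO|^2+(|IA_j|-R)^2\geq 0,$$
which is immediate (with equality iff $I=O$ and $|IA_j|=R$, i.e.\ the regular simplex, in which case the face $H_j$ is tangent to the circumsphere at the antipode of $A_j$). The main obstacle is the conceptual first step: spotting that, when $j$ is fixed and $i$ is varied, Proposition \ref{prop_gmt} furnishes the face hyperplane of $T$ opposite $C_j$ explicitly in terms of $\overrightarrow{IA_j}$. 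Once that observation is made, the remainder is just a minimization on a sphere followed by a completion of squares.
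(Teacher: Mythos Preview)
Your proof is correct and shares the paper's key first step: using Proposition~\ref{prop_gmt} with $j$ fixed and $i$ varying to identify the face hyperplane of $T$ opposite $C_j$ as $\{P:\overrightarrow{IP}\cdot\overrightarrow{IA_j}=-R^2\}$. From there the two arguments diverge in style. The paper works geometrically: it introduces the foot $H$ of $I$ on that face, observes $|IH|\cdot|IA_j|=R^2$, applies AM--GM to get $|A_jH|\ge 2R$, and then a Lipschitz/triangle-inequality argument shows the projection $G$ of $O$ onto the face satisfies $|OG|\ge R$; a separate paragraph afterwards checks that the ball lies inside $T$ rather than outside. Your approach is purely algebraic: you write $T$ directly as the intersection of the half-spaces $\{\overrightarrow{IP}\cdot\overrightarrow{IA_j}\ge -R^2\}$, minimize the linear form over the circumsphere, and reduce everything to the trivial inequality $|IO|^2+(|IA_j|-R)^2\ge 0$. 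Your route is shorter and handles the ``inside versus outside'' issue automatically through the half-space description; the paper's route has the compensating advantage that its intermediate objects ($H$, $G$, and the bound $|A_jH|\ge 2R$) are reused verbatim later in the proof of Theorem~\ref{main_theorem}.
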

\begin{proof}
\begin{figure}[htp]
    \centering
    \includegraphics[width=12cm]{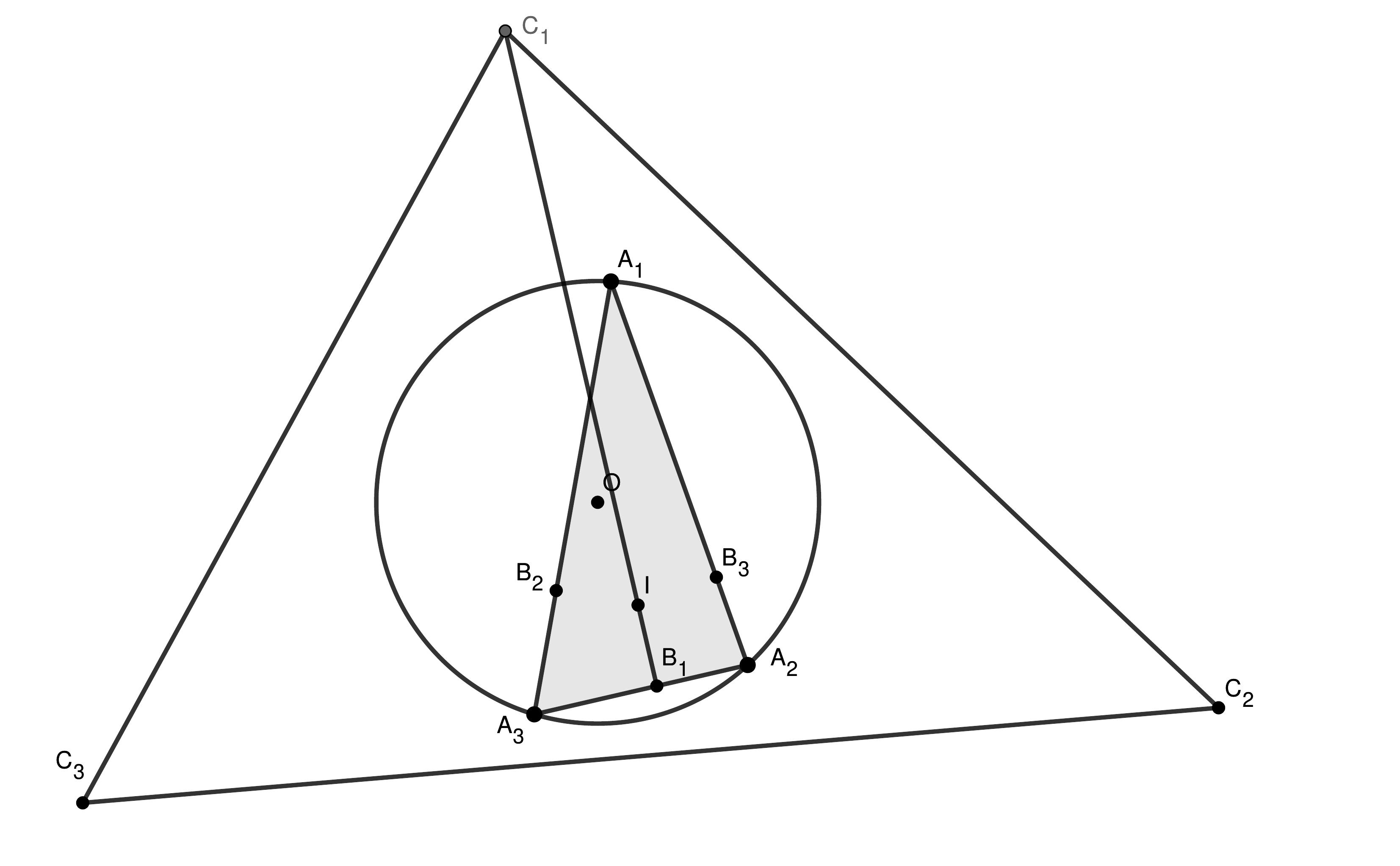}
    \caption{Statement of the Lemma \ref{lemma_container} for the triangle ($n=2$): $\triangle C_1C_2C_3$ contains the circumscribed circle of $\triangle A_1 A_2 A_3.$}
    \label{fig:container-lemma}
\end{figure}
(Please notice that here we meet the circumscribed sphere of $S$ for the first time. See Figure \ref{fig:container-lemma}.)

Let again $S = A_1 \dots A_{n+1},$ and denote $T$ by $C_1 \dots C_{n+1}$. Let $O$ and $I$ be the centers of the
circumscribed and inscribed spheres of $S.$ Denote the hyperplane containing the full-dimension face of $T$, that is opposite to $C_i$ by 
$\Gamma_i$. So

\begin{equation}
\Gamma_i := Lin\left<C_1 \dots C_{i-1}C_{i+1} \dots C_{n+1} \right>.
\end{equation}
From the Property \ref{prop_gmt}
$\forall j \neq i \quad \overrightarrow{IC_j} \cdot \overrightarrow{IA_i} = -R^2,$ where $R$ is the radius of circumscribed sphere of $S,$ so
\begin{equation}
\Gamma_i = \left\{ X: \overrightarrow{IX} \cdot \overrightarrow{IA_i} = -R^2 \right\}.
\end{equation}
This leads to
\begin{equation}
\Gamma_i \perp \overrightarrow{IA_i},
\label{IA_perp}
\end{equation}
and if we denote the intersection of $(IA_i)$ and $\Gamma_i$ by $H,$ then

\begin{equation}
 \overrightarrow{IH} \cdot \overrightarrow{IA_i} = -R^2,
\end{equation}

or in other words,

\begin{equation}
\left\{
\begin{aligned}
I \in [HA_i]\\
|IH| \cdot |IA_i| = R^2.\\ \end{aligned}
\right.
\label{prod_property}
\end{equation}

See Figure \ref{fig:projection}.
\begin{figure}[htp]
    \centering
    \includegraphics[width=12cm]{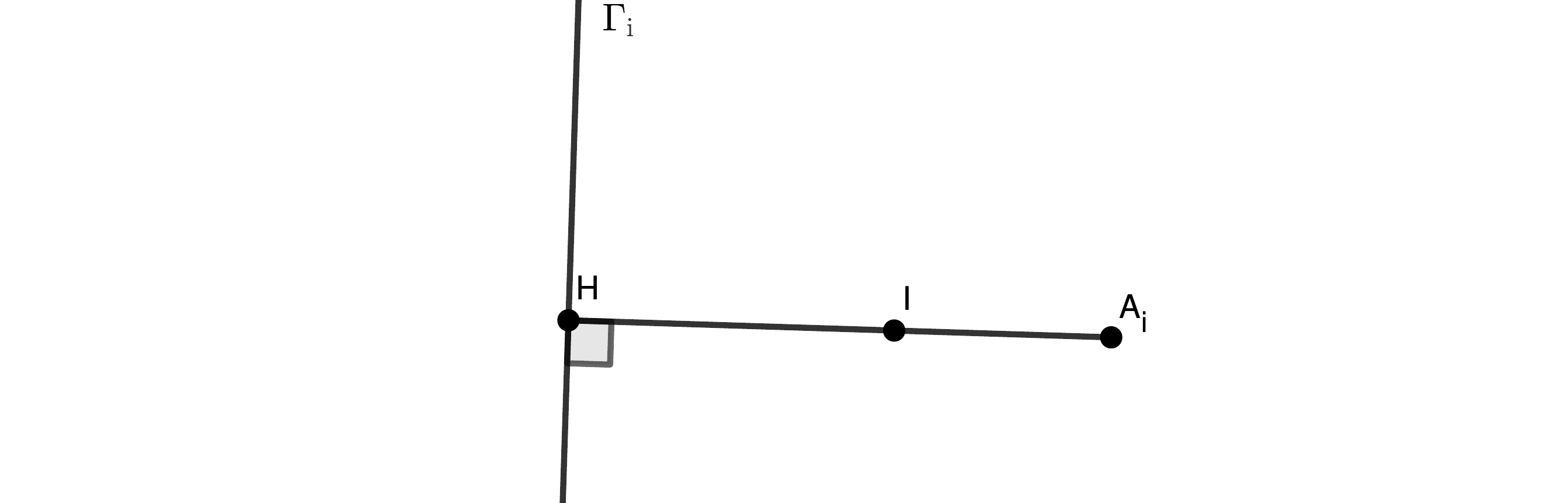}
    \caption{Point $H$ definition.}
    \label{fig:projection}
\end{figure}

Now let's consider the point $O,$ and let the projection of $O$ on $\Gamma_i$ be $G.$
See Figure \ref{fig:projection2}.
\begin{figure}[htp]
    \centering
    \includegraphics[width=12cm]{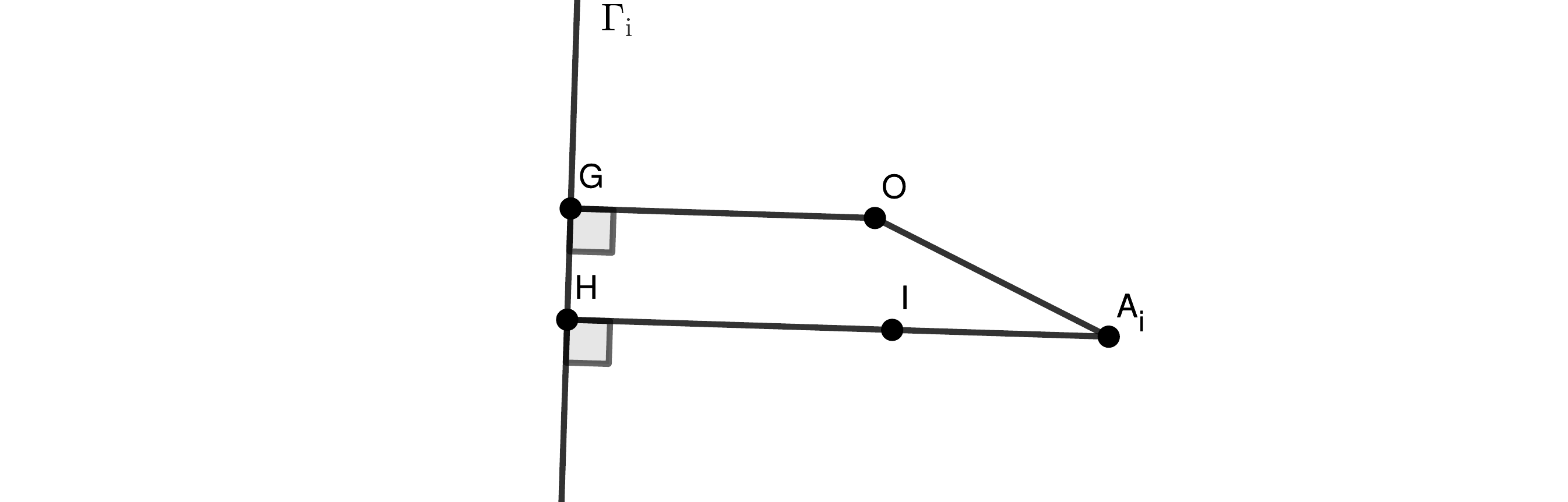}
    \caption{Point $G$ definition}
    \label{fig:projection2}
\end{figure}

\begin{equation}
|HA_i| = |HI| + |IA_i| \geqslant 2 \sqrt{|HI| \cdot |IA_i|} = 2 \sqrt{R^2} = 2R.
\label{2R}
\end{equation}
Since $|OA_i| = R,$ we can say:
\begin{equation}
|A_iH| \leqslant |A_iO| + |OG| = R + |OG|
\end{equation}
\begin{equation}
|OG| \geqslant |A_iH| - R \geqslant 2R - R = R.
\label{lower_bound_og}
\end{equation}

It means that the open ball circumscribed around $S$ does not cross the plane $\Gamma_i.$
The same holds for all $i = 1,..,n+1.$ 
So, the ball does not cross any of the faces of $T.$
The only part left is to show that it lies inside $T,$ and not outside.

The hyperplane $\Gamma_i$ cuts the space in two half-spaces.
Let $\Pi_i$ be the half-space containing the ball circumscribed around $S.$
The inscribed ball of $S$ is contained inside the circumscribed ball of $S,$ so it also lies in $\Pi_i.$
Then so does its center, point $I.$
So, $I \in \Pi_i.$ 
From the Property \ref{prop_gmt}, $I \in Int(T),$ so the half-space $\Pi_i$ contains not only $I,$ but also the
whole simplex $T.$

So, $T = \cap_{i =1}^{n+1} \Pi_i,$ and since $\forall i $ the half-space $\Pi_i$ contains the circumscribed ball of $S,$ this ball is contained in $T.$
\end{proof}

\begin{corollary}
Let $S \subset \mathbb{R}^n$ be a simplex with $n+1$ vertices. $T = Root(S).$ Let $(r_1, R_1)$ be the radii
of inscribed and circumscribed spheres of $S,$ and $(r_2, R_2)$ be the radii of inscribed and circumscribed spheres of $T.$ Then
\begin{equation}
\frac{r_2}{R_2} \geqslant \frac{r_1}{R_1}
\end{equation}
\end{corollary}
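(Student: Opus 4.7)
The plan is to combine the two preceding results — Proposition \ref{prop_radii} (which gives an exact formula for $R_2$) and Lemma \ref{lemma_container} (which gives a lower bound on $r_2$) — in a single line of algebra.

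First, I would invoke Proposition \ref{prop_radii} applied to $S$: it tells us that the circumradius of $T = Root(S)$ is
\begin{equation}
R_2 = \frac{R_1^2}{r_1}.
\end{equation}
Next, by Lemma \ref{lemma_container}, the simplex $T$ contains the closed ball circumscribed around $S$, whose radius is $R_1$. Since the inradius of any convex body is the supremum of the radii of balls contained in it (and for a simplex this supremum is attained by the inscribed sphere), the existence of a ball of radius $R_1$ inside $T$ forces
\begin{equation}
r_2 \geqslant R_1.
\end{equation}

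Combining the two, I get
\begin{equation}
\frac{r_2}{R_2} \;\geqslant\; \frac{R_1}{R_1^2/r_1} \;=\; \frac{r_1}{R_1},
\end{equation}
which is exactly the claim.

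There is essentially no obstacle: the only non-routine fact is the observation that the inradius bounds from below any ball inscribed in $T$, but this is the standard characterization of the inradius of a convex body, and one could even cite the constructive fact that the incircle of a simplex is the unique ball tangent to all facets from inside, so any other inscribed ball has radius at most $r_2$. Thus the corollary follows immediately from Proposition \ref{prop_radii} and Lemma \ref{lemma_container} without further geometry.
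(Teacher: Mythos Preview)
Your proof is correct and essentially identical to the paper's: both invoke Property~\ref{prop_radii} for $R_2 = R_1^2/r_1$, then Lemma~\ref{lemma_container} to obtain $r_2 \geqslant R_1$, and finish with the same one-line algebra. The paper phrases the inradius fact as ``any sphere inside $T$ has radius no larger than the radius of the inscribed sphere of $T$,'' which is exactly the characterization you invoke.
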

\begin{proof}
From Property \ref{prop_radii}, $R_2 = \frac{R_1^2}{r_1},$ from Lemma \ref{lemma_container} the circumscribed sphere of $S$ lies inside $T.$ Any sphere inside $T$ has radius no larger than the radius of inscribed sphere of $T.$ So
\begin{equation}
R_1 \leqslant r_2,
\label{container_inequality}
\end{equation}

and
\begin{equation}
\frac{r_2}{R_2} = \frac{r_2}{\frac{R_1^2}{r_1}}= \frac{r_1 r_2}{R_1 ^ 2} \geqslant \frac{r_1 R_1}{R_1^2} =
\frac{r_1}{R_1}.
\end{equation}

\end{proof}
\begin{corollary}
Let $S_1 \in \mathbb{R}^n$ be a simplex with $n+1$ vertices. For every $k \geqslant 1 $ define $ S_{k+1}:= Root(S_k),$ and
let $(r_k, R_k)$ be the radii of inscribed and circumscribed spheres of $S_k.$ Then $\frac{r_k}{R_k}$ converges as
$k \to \infty.$ Also, denote $\lim \limits_{k\to \infty} \frac{r_k}{R_k}  = \rho,$ then $\forall k \enskip \frac{r_k}{R_k}  \leqslant \rho$
\label{corollary_limiting_radius}
\end{corollary}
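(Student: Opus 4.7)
The plan is to invoke the monotone convergence theorem on the sequence $\{r_k/R_k\}_{k\geqslant 1}$. Applying the previous corollary to the pair $(S_k, S_{k+1})$ yields immediately $\frac{r_{k+1}}{R_{k+1}} \geqslant \frac{r_k}{R_k}$, so the sequence is non-decreasing. For boundedness, I would observe that in any nondegenerate simplex the inscribed ball lies inside the simplex, which in turn lies inside the closed circumscribed ball; therefore $r_k \leqslant R_k$, and $\frac{r_k}{R_k} \leqslant 1$ for every $k$. (The sharper classical Euler-type estimate $r/R \leqslant 1/n$ would also do, but is not needed.) A non-decreasing real sequence bounded above is convergent, so $\rho := \lim_{k\to\infty} r_k/R_k$ exists.

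Before applying this, I should tacitly check that the sequence $S_k$ is well-defined, i.e.\ that each $S_k$ is nondegenerate. This follows by induction: if $S_k$ is nondegenerate, its contact simplex is nondegenerate (its circumcenter is an interior point, as cited in the proof of the preceding proposition), and $S_{k+1}=Root(S_k)$ is homothetic to it, hence also nondegenerate.

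The second assertion, $\frac{r_k}{R_k} \leqslant \rho$ for all $k$, is then immediate: for a monotone non-decreasing real sequence, every term is at most the supremum, which coincides with the limit when the sequence converges. I do not expect any substantive obstacle here; essentially the corollary is just the packaging of the previous corollary (monotonicity) with the elementary a-priori bound $r/R \leqslant 1$ and a standard convergence theorem.
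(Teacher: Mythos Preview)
Your proposal is correct and follows essentially the same approach as the paper: the paper's proof simply notes that $\frac{r_k}{R_k}$ is non-decreasing (by the previous corollary) and bounded above by $1$ since $r_k < R_k$, then invokes monotone convergence. Your additional remarks on well-definedness of the sequence and the final inequality $\frac{r_k}{R_k}\leqslant\rho$ are sound and slightly more explicit than the paper, but the argument is the same.
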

\begin{proof}
$\frac{r_k}{R_k}$ is non-decreasing, and $\forall k \enskip r_k < R_k.$ So $\frac{r_k}{R_k}$ converges.
\end{proof}

\begin{prop}
In the notation as above, $\lim\limits_{k \to \infty} \frac{r_k}{R_k} \leqslant \frac{1}{n-1}.$
\label{rho_bound}
\end{prop}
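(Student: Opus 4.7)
My strategy is to prove the pointwise inequality $r_{k+1}/R_{k+1} \leqslant 1/n$ for every $k \geqslant 1$, which yields the stated bound on $\rho$ via Corollary \ref{corollary_limiting_radius} (since $1/n \leqslant 1/(n-1)$). This is in effect the Euler inequality in dimension $n$ for the simplex $T = S_{k+1}$, but the argument is short enough that I would derive it from scratch, using the two structural properties of $T$ established earlier: Property \ref{prop_radii} (identifying the circumcenter and circumradius of $T$ as $I$ and $R_{k+1}$) and the property asserting $I \in \mathrm{Int}(T)$.

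Writing $I$ in barycentric coordinates with respect to the vertices of $T$ as $I = \sum_{i=1}^{n+1}\mu_i C_i$, with $\mu_i > 0$ and $\sum\mu_i = 1$, the standard barycentric formula gives the perpendicular distance from $I$ to the face $\Gamma_i$ opposite $C_i$ as $\mu_i H_i$, where $H_i$ denotes the altitude of $T$ dropped from $C_i$. Since $I$ and $C_i$ lie on the same side of $\Gamma_i$, at perpendicular heights $\mu_i H_i$ and $H_i$ respectively, decomposing $\vec{IC_i}$ into components parallel and perpendicular to $\Gamma_i$ shows that the perpendicular component has magnitude $(1-\mu_i)H_i$. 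Since this component is at most $|IC_i| = R_{k+1}$, we obtain $H_i \leqslant R_{k+1}/(1-\mu_i)$.

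I would then combine this with the identity $\sum_{i=1}^{n+1} 1/H_i = 1/r_{k+1}$ (which follows from writing the incenter of $T$ in barycentric coordinates with weights proportional to the face volumes) to get
\[
\frac{1}{r_{k+1}} \;\geqslant\; \frac{1}{R_{k+1}}\sum_{i=1}^{n+1}(1-\mu_i) \;=\; \frac{n}{R_{k+1}},
\]
whence $r_{k+1}/R_{k+1} \leqslant 1/n$, and the limit inherits this bound.

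The point requiring most attention is the perpendicular-component computation: one has to verify that $C_i$ lies strictly farther from $\Gamma_i$ than $I$ does, so that the perpendicular component of $\vec{IC_i}$ equals $(1-\mu_i)H_i$ with the correct sign, rather than something smaller after cancellation. This relies on $\mu_i \in (0,1)$, i.e., precisely on the property $I \in \mathrm{Int}(T)$ proved above. Once this geometric step is in place, the bound follows at once from the two standard barycentric identities.
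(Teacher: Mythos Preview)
Your proof is correct, and in fact proves the sharper bound $\rho \leqslant 1/n$. The paper's own argument is a one-liner: it simply invokes the general Euler inequality for simplices in $\mathbb{R}^n$ (stated there in the slightly weaker form $r/R \leqslant 1/(n-1)$) as a known fact, applies it to each $S_k$, and passes to the limit. You instead re-derive the Euler inequality from scratch in the special situation where the circumcenter is interior, exploiting Property~\ref{prop_radii} and the interiority property already established for $T = S_{k+1}$. This makes your argument self-contained and yields the optimal constant $1/n$, at the cost of a few extra lines; the paper's approach is shorter but relies on an external citation. Note also that your argument covers $S_{k+1}$ for $k \geqslant 1$, i.e., all $S_m$ with $m \geqslant 2$, which suffices for the limit; the general Euler inequality would also cover $S_1$, but this is irrelevant here.
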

\begin{proof}
Since, for any $n+1-$vertex simplex in $\mathbb{R}^n,$ with the radii of inscribed and circumscribed spheres $r$ and $R$ holds $\frac{r}{R} \leqslant \frac{1}{n-1},$ then for all $k$ holds
$\frac{r_k}{R_k} \leqslant \frac{1}{n-1},$ so the limit of $\frac{r_k}{R_k}$ is smaller than or equals $\frac{1}{n-1}.$
\end{proof}

\begin{prop}
In the notation as above, if $n=2$ ($S_1$ is a triangle), then $\lim\limits_{k \to \infty} \frac{r_k}{R_k} = \frac{1}{2}.$
\end{prop}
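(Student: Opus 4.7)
The plan is to reduce this statement to the contact-triangle angle dynamics already recorded in the introduction. By Proposition \ref{prop_radii}, $S_{k+1}=Root(S_k)$ is homothetic (via a central symmetry at $I$ composed with a positive dilation) to the contact triangle of $S_k$. Both homotheties and central symmetries preserve angles, so the unordered angle triple of $S_{k+1}$ agrees with that of the contact triangle of $S_k$. Since $r/R$ is a shape invariant, the sequence $r_k/R_k$ is exactly what one obtains by running the classical contact-triangle iteration starting from $S_1$.

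With this identification in hand, I would write the angles of $S_k$ as $\left(\frac{\pi}{3}+\alpha_k,\frac{\pi}{3}+\beta_k,\frac{\pi}{3}+\gamma_k\right)$ with $\alpha_k+\beta_k+\gamma_k=0$. The formula quoted in the introduction then gives the recursion $\alpha_{k+1}=-\alpha_k/2$ and the analogous equalities for $\beta_k,\gamma_k$. Hence all three deviations decay geometrically to $0$, and $S_k$ converges in shape to the equilateral triangle.

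To finish, I would invoke continuity of $r/R$ on the space of triangle shapes (parametrised by the angle triple, a compact simplex once the sum constraint is imposed), together with the standard equality $r/R=\frac{1}{2}$ for the equilateral triangle. Combined with the monotonicity of $r_k/R_k$ from Corollary \ref{corollary_limiting_radius} and with Euler's inequality $r/R\leqslant\frac{1}{2}$, this pins the limit down as exactly $\frac{1}{2}$.

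I do not anticipate any real obstacle: the heavy lifting has already been done in Proposition \ref{prop_radii} (Root is homothetic to the contact triangle) and in the introduction (the halving-and-sign-flip angle recursion). The only point that deserves a line of justification is that the central-symmetry component of the map $K$ does not scramble the angle triple, so that the angle recursion from the introduction really does apply to the Root iteration unchanged; but this is immediate from the fact that central symmetry is an isometry.
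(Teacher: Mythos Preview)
Your proposal is correct and follows essentially the same route as the paper: identify the Root iteration with the contact-triangle iteration at the level of angles (via Proposition~\ref{prop_radii}), invoke the halving recursion from the introduction to conclude the angles converge to $\pi/3$, and then use continuity of $r/R$ as a function of the angles together with the equilateral value $\tfrac{1}{2}$. The paper's proof is terser and leaves the first step implicit, while you spell it out; your appeal to Corollary~\ref{corollary_limiting_radius} and Euler's inequality at the end is harmless but unnecessary, since continuity already determines the limit.
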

\begin{proof}
If $n=2,$ the angles of $S_k$ converge to $\frac{\pi}{3},$ as $k \to \infty.$ In the neighborhood of the equilateral triangle, the factor of the radii of inscribed and circumscribed circles of a triangle is a continuous function of the angles. So this factor converges to the value of the factor in the equilateral triangle.
There it equals $\frac{1}{2}.$
\end{proof}

\section{Main result}
\begin{theorem}
\label{main_theorem}
Let $S_1 \in \mathbb{R}^n$ be a simplex with $n+1$ vertices. For every $k \geqslant 1$ define $S_{k+1}:= Root(S_k),$ and let $(I_k, O_k)$ be the centers of inscribed and circumscribed spheres of $S_k.$ Then the sequences 

\begin{equation}
	\left\{O_{2k+1},k\in \mathbb{N}\right\} \text{ and } \left\{O_{2k},k\in \mathbb{N}\right\}
\end{equation}
converge as $k \to \infty.$
\end{theorem}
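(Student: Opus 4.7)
The plan is to show that $|O_{k+2} - O_k|$ is summable in each parity, whence both subsequences are Cauchy. The key identity is $O_{k+1} = I_k$ from Proposition \ref{prop_radii}, so $O_{k+2}$ is the incenter of $S_{k+1}$.

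For the geometric bound, Lemma \ref{lemma_container} gives that the circumball $B(O_k, R_k)$ is contained in $S_{k+1}$, so $O_k$ is at distance at least $R_k$ from every face of $S_{k+1}$. Since $r_{k+1} \geq R_k$ by (\ref{container_inequality}), the point $O_k$ belongs to the shrunken simplex
\[
S_{k+1}' := \{x \in S_{k+1} : d(x, \partial S_{k+1}) \geq R_k\},
\]
which is the homothetic image of $S_{k+1}$ with center $I_{k+1} = O_{k+2}$ and ratio $\epsilon_k := (r_{k+1} - R_k)/r_{k+1}$. Both $O_k$ and $O_{k+2}$ lie in $S_{k+1}'$, whose diameter is at most $2 R_{k+1} \epsilon_k$. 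Writing $\rho_k := r_k/R_k$ and using $R_{k+1} = R_k/\rho_k$, one checks that $R_{k+1}\epsilon_k = R_k(1/\rho_k - 1/\rho_{k+1})$, so
\[
|O_{k+2} - O_k| \;\leq\; 2 R_{k+1} \epsilon_k \;=\; 2 R_k \bigl( 1/\rho_k - 1/\rho_{k+1} \bigr).
\]

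The hard part is showing this bound is summable in each parity. The increments $1/\rho_k - 1/\rho_{k+1} \geq 0$ telescope to the finite value $1/\rho_1 - 1/\rho$ by Corollary \ref{corollary_limiting_radius}, but each is weighted by $R_k$, which grows geometrically like $\rho^{-k}$. To close the argument I would control the rate at which $\rho_k \to \rho$: along each parity the shape of $S_k$ converges to a local maximizer of $\rho$ (since $\rho_k$ is monotone non-decreasing), so by the critical-point property $\rho - \rho_k$ is quadratic in the shape deviation, and this deviation itself should decay geometrically by linearizing the two-step root map at its fixed shapes. In the $n = 2$ case these estimates give $|O_{k+2} - O_k| = O(2^{-k})$, hence summable; extending the quantitative contraction estimate to general $n$ is the technical heart of the argument.
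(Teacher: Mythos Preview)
Your setup matches the paper's closely — the identity $O_{k+1}=I_k$, the shrunken simplex $S_{k+1}'$ homothetic to $S_{k+1}$ with center $I_{k+1}=O_{k+2}$ and ratio $(r_{k+1}-R_k)/r_{k+1}$, and the observation that both $O_k$ and $O_{k+2}$ lie in it. Your computed bound
\[
|O_{k+2}-O_k|\;\le\;2R_{k+1}\epsilon_k\;=\;2R_k\!\left(\tfrac{1}{\rho_k}-\tfrac{1}{\rho_{k+1}}\right)
\]
is correct. The gap is exactly where you flag it: this bound is not summable on its own, and your proposed remedy (shape convergence to a critical point of $\rho$, hence quadratic vanishing of $\rho-\rho_k$, via linearization of the two-step root map) is a program, not a proof. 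Nothing in the paper's toolkit provides a contraction estimate for the shape map, and for general $n$ one does not even know a priori what the limiting shapes are.

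The paper closes the gap differently, without any shape-space analysis. It proves an \emph{upper} bound companion to Lemma~\ref{lemma_container}: writing $d_k=|O_kI_k|=|O_kO_{k+1}|$, one shows geometrically that the distance from $O_k$ to every facet of $S_{k+1}$ is at most $R_k+3d_k^2/R_k$. Hence $O_k$ lies within $3d_k^2/R_k$ of every facet of your $S_{k+1}'$, which forces $r_{k+1}'\le 3d_k^2/R_k$ (a point close to all facets bounds the inradius). This yields the quadratic estimate
\[
|O_kO_{k+2}|\;\le\;2R_{k+1}'\;\le\;\frac{12\,d_k^{\,2}}{\rho\,R_k}.
\]
A separate first step (again purely via the face geometry of $S_{k+1}$, using $\rho_k\to\rho$) shows $d_k/R_k\to 0$. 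Bootstrapping then gives $d_{k+1}\le(1+c\varepsilon)d_k$ for $k$ large, so $d_k$ grows at most like $(1+c\varepsilon)^k$ while $R_k\gtrsim\rho^{-k}$; thus $d_k^2/R_k$ is dominated by a geometric series with ratio $<1$, and the sum converges. The ingredient you are missing is precisely this two-sided control on $\operatorname{dist}(O_k,\partial S_{k+1})$, which replaces the linearization argument entirely.
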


\begin{proof}
\begin{figure}[htp]
    \centering
    \includegraphics[width=12cm]{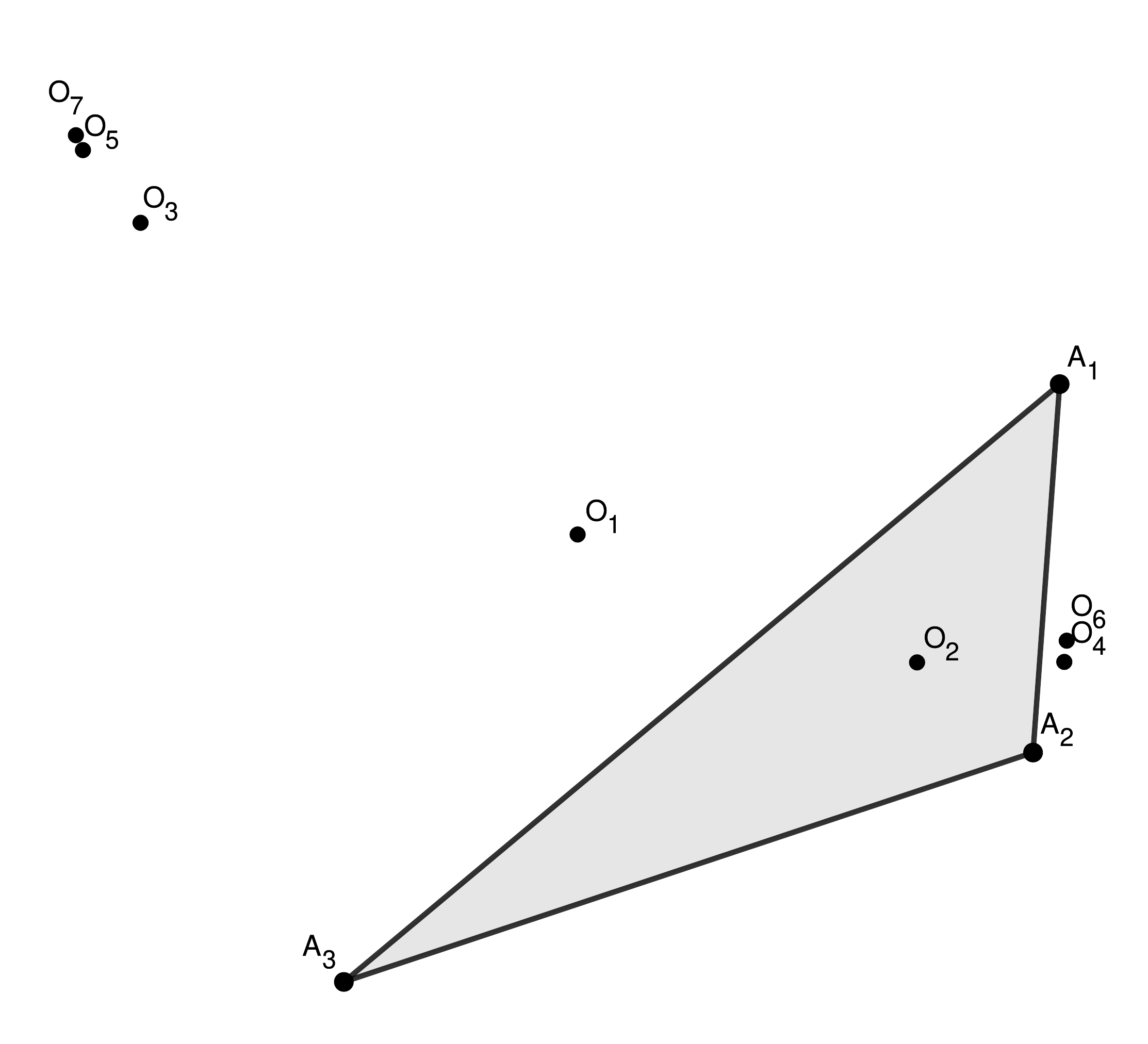}
    \caption{Statement of the Theorem \ref{main_theorem} for the triangle ($n=2$): given $S_1 = \triangle A_1A_2A_3,$ point $O_1$ being its circumcenter, and $O_k$ being the circumcenter of $Root^{k-1}(S_1), k > 1,$ sequences $O_2, O_4, O_6, \dots$ and
    $O_1, O_3, O_5,\dots$ both have limits in $\mathbb{R}^2.$}
    \label{fig:iterations}
\end{figure}
(See Figure \ref{fig:iterations}.)

From the Corollary \ref{corollary_limiting_radius} the sequence $\frac{r_k}{R_k}$ has its limit, let's call it $\rho.$ Then
$\forall \varepsilon >0 \enskip  \exists K: \forall k  \geqslant K$ we have 

\begin{equation}
	\rho \geqslant \frac{r_k}{R_k} \geqslant \rho(1 - \varepsilon).
	\label{lower_bound_radius}
\end{equation}

Assume further that $\varepsilon < \frac{1}{1000}$ for all $k > K_1,$ and consider only these $k.$

\textbf{1. Estimate on $|I_k O_k|$ from above.}

From Lemma \ref{lemma_container} the circumscribed sphere of $S_k$ lies inside $S_{k+1},$ and so does its center, point $O_k.$

So $O_k \in Int(S_{k+1}).$

Since $r_{k+1} \leqslant \rho R_{k+1},$ for any point $X \in Int(S_{k+1})$ the simplex $S_{k+1}$ has a full-dimension face $\Gamma,$ such that $dist(X,\Gamma) \leqslant \rho R_{k+1}.$

And so, for $X=O_k$ the simplex $S_{k+1}$ has a full-dimension face $\Gamma,$ such that 
\begin{equation}
dist(O_k,\Gamma) \leqslant \rho R_{k+1}.
\end{equation}
From the Property \ref{prop_radii} we can get that:
\begin{equation}
\rho R_{k+1}=\rho\frac{R_k^2}{r_k}.
\end{equation}
Using (\ref{lower_bound_radius}) we can continue:

\begin{equation}
dist(O_k,\Gamma) \leqslant \rho\frac{R_k^2}{r_k}  \leqslant \frac{\rho R_k}{\rho (1 - \varepsilon)}=
 \frac{R_k}{1 - \varepsilon}.
 \label{upper-bound-distance}
\end{equation}

Denote the vertex of $S_k$ corresponding to $\Gamma$ by $A$ ($\Gamma$ is a face of $S_{k+1}$, not $S_k$, so it is not "opposite" in the regular sense). 

Let the projection of $O_k$ on $\Gamma$ be $G.$
Let the intersection of $(AI_k)$ and $\Gamma$ be $H.$
From (\ref{IA_perp}) we know that $AH \perp \Gamma.$
See Figure \ref{fig:projection3}.
\begin{figure}[!htbp]
    \centering
    \includegraphics[width=12cm]{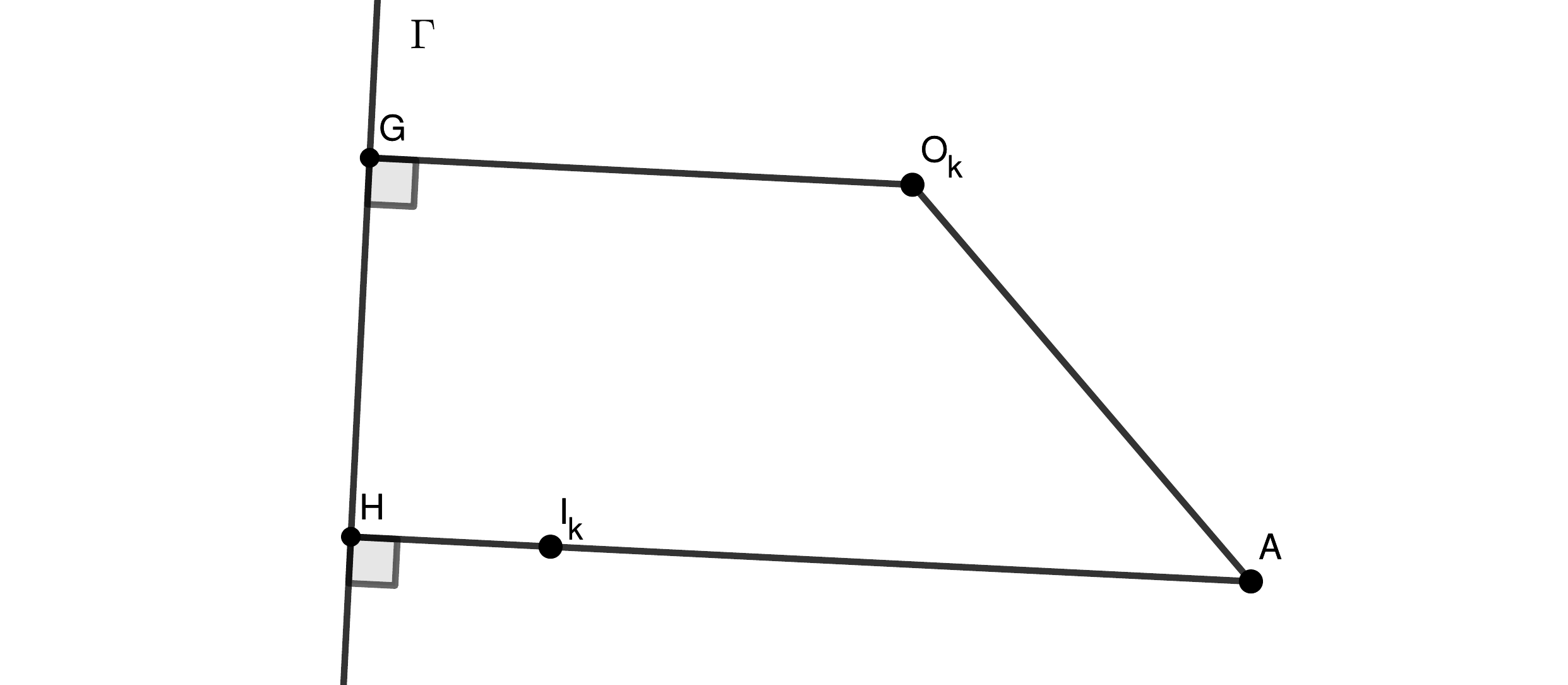}
    \caption{Points $G$ and $H$.}
    \label{fig:projection3}
\end{figure}
We will further write $R$ instead of $R_k.$ 
Remember that from (\ref{2R}) we have
\begin{equation}
	|AH| \geqslant 2R.
	\label{2R2}
\end{equation} 
Let the projection of $O_k$ on $AH$ be $X.$ See Figure \ref{fig:projection6}.
\begin{figure}[!htbp]
    \centering
    \includegraphics[width=12cm]{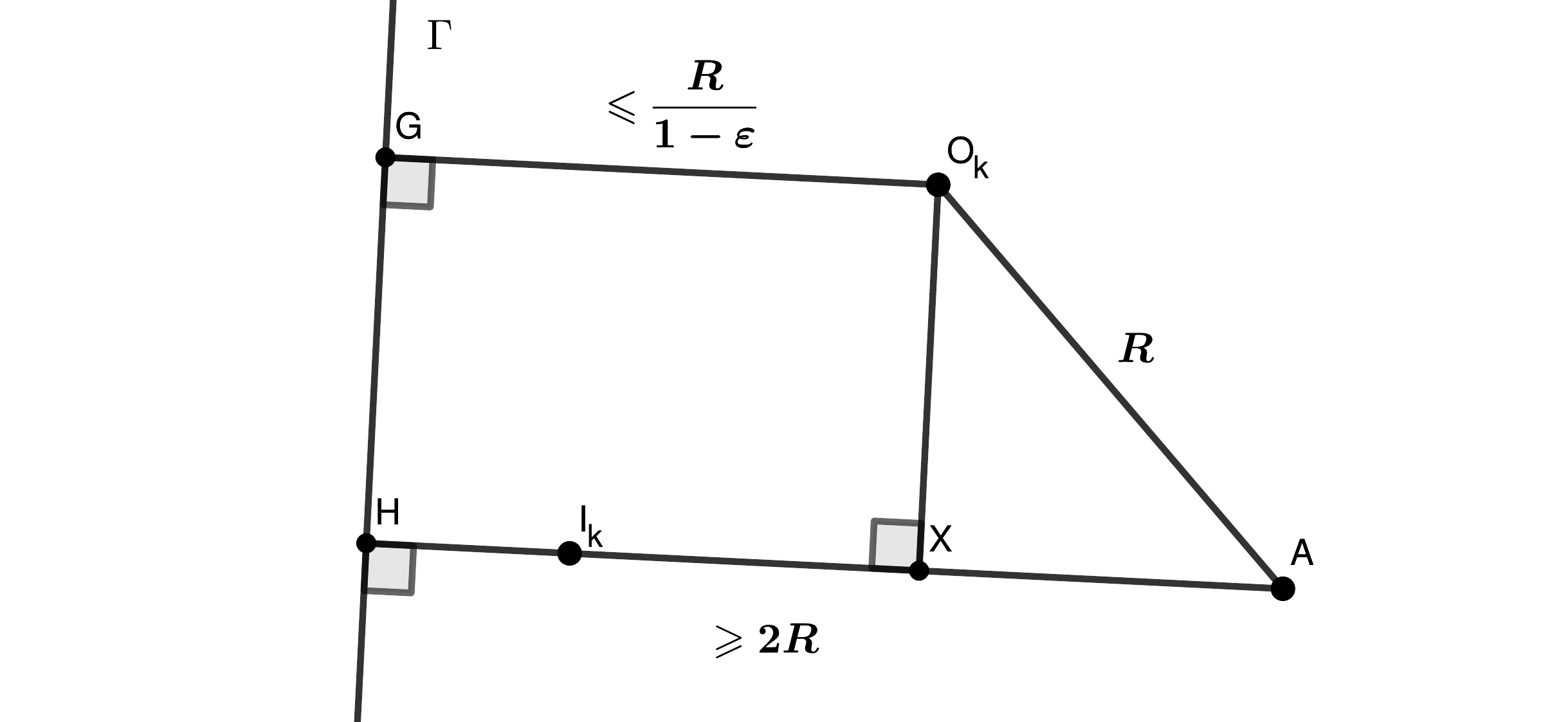}
    \caption{Point $X$ definition.}
    \label{fig:projection6}
\end{figure}

From (\ref{upper-bound-distance}) we have:
\begin{equation}
	|XH| =|O_k G|  =  dist(O_k,\Gamma) \leqslant  \frac{R}{1 - \varepsilon}.
	\label{XH_upper}
\end{equation}

For $\varepsilon < \frac{1}{2}$ we get two corollaries from the combination of (\ref{2R2}) and (\ref{XH_upper}):
first is that, since $O_k,X,A$ all lie in the same half-space with respect to $\Gamma,$ $X$ lies inside the interval $[AH],$ and second is:
\begin{equation}
	|AX| = |AH| - |HX| \geqslant 2R - |HX|  \geqslant 2R - \frac{R}{1 - \varepsilon} = R\left( 2 - \frac{1}{1-\varepsilon}\right).
\end{equation}
There is now a lower bound on $\frac{|AX|}{R}.$
More specifically, we know, that as $\varepsilon \to 0, \enskip \frac{|AX|}{R} \to 1.$
And also, $|AX|$ is a cathetus in the right triangle $\triangle XAO_k,$ with the hypothenuse $O_k A$ equal $R.$
This allows estimating from above $\angle HAO_k.$
 
Let $\alpha := \angle HAO_k,$ then first, $\alpha < \frac{\pi}{2}.$ Since $AX = R \cos \alpha:$
\begin{equation}
 R \cos \alpha \geqslant  R\left( 2 - \frac{1}{1-\varepsilon}\right).
 \label{rcos}
\end{equation}
From (\ref{rcos}) we have:
\begin{equation}
 \cos \alpha \geqslant   2 - \frac{1}{1-\varepsilon}
\end{equation}
\begin{equation}
\alpha \leqslant \arccos \left( 2 - \frac{1}{1-\varepsilon}\right)
\label{alpha_upper}
\end{equation}

(which means that $\alpha \to 0,$ as $\varepsilon \to 0$).

Let $O' \in [AH]$ be a point, such that $|O'A|=R.$ Such point exists since $|AH| \geqslant 2R.$
See Figure \ref{fig:projection4}.
\begin{figure}[htp]
    \centering
    \includegraphics[width=12cm]{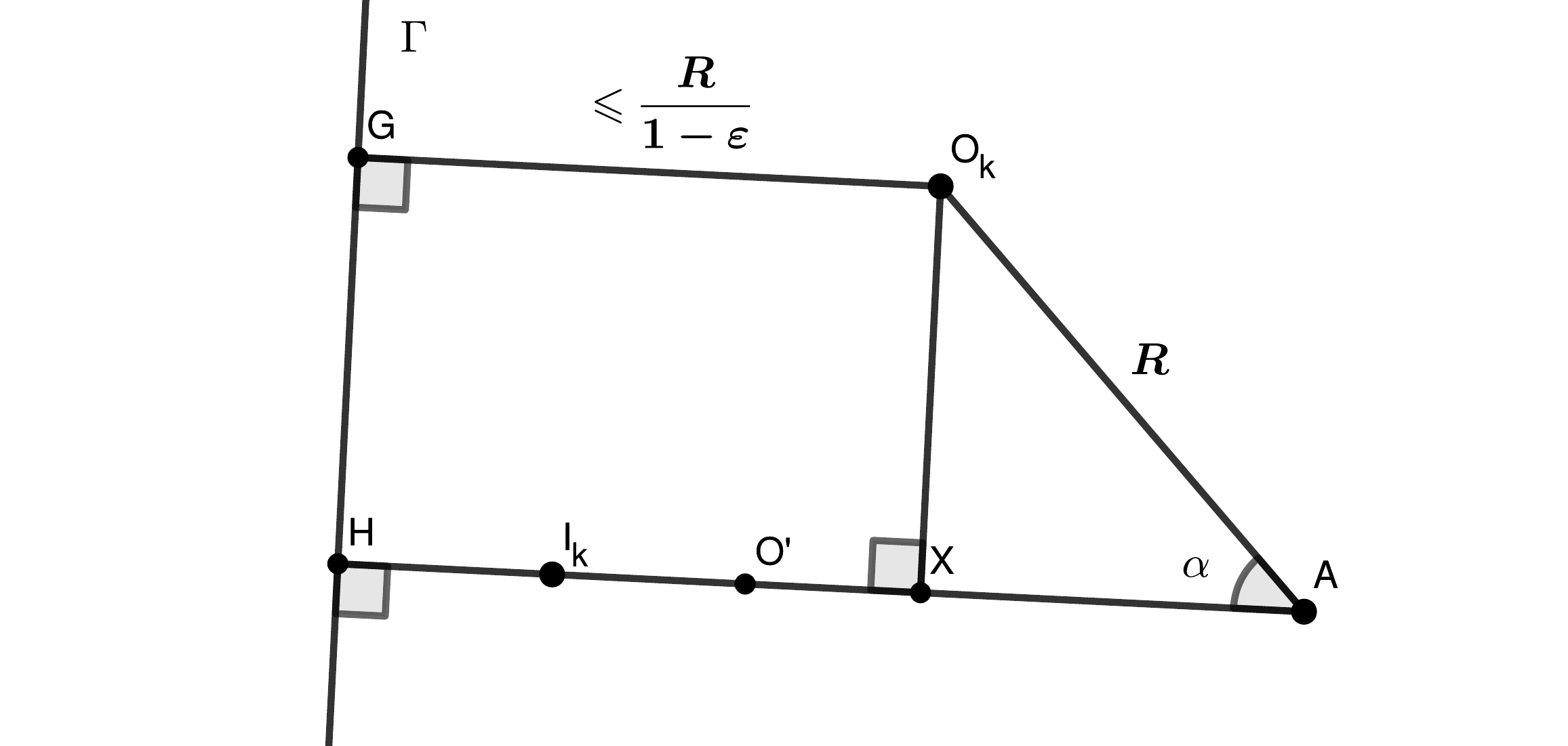}
    \caption{Point $O'$ definition.}
    \label{fig:projection4}
\end{figure}

$|O_k A| = |O'A| = R.$ The length of the interval $[O_k O' ]$ is no larger than the
length of the arc $\arc{O_k O'} $ in the circle with radius $R$ and center $A.$ Combining this with (\ref{alpha_upper}) we get:

\begin{equation}
|O_kO'|  \leqslant R\alpha \leqslant R \arccos \left( 2 - \frac{1}{1-\varepsilon}\right).
\label{eq_oodash}
\end{equation}

The chapter's goal is to estimate $|O_k I_k|$ from above, and we have an estimate on $|O'O_k|.$
Since $|O_k  I_k| \leqslant |O_k O'| + |O' I_k|,$ it is only left to estimate $|O'I_k|$ from above.

First, using that $\angle O_k X A = \frac{\pi}{2},$ and (\ref{XH_upper}) we estimate $|AH|:$

\begin{equation}
	|AH| = |AX| + |XH| \leqslant R + |XH| \leqslant R + \frac{R}{1 - \varepsilon} = R \left( 1 + \frac{1}{1 - \varepsilon}\right).
	\label{AH_upper}
\end{equation}

Now, (\ref{prod_property}) says:
\begin{equation}
	|AI_k| \cdot |I_kH| = R^2.
\end{equation}

Let's denote $|AI_k| - |AO'|=|AI_k| - R,$ the oriented length of $O'I_k$ by $z.$ Then

\begin{equation}
	|AH| = |AI_k| + |I_kH| = |AI_k| + \frac{R^2}{|AI_k|} = R + z + \frac{R^2}{R + z}.
	\label{AH_eq}
\end{equation}

Combining (\ref{AH_upper}) and (\ref{AH_eq}) we get:
\begin{equation}
	R + z + \frac{R^2}{R + z} \leqslant   R \left( 1 + \frac{1}{1 - \varepsilon} \right),
\end{equation}
and dividing both parts by $R,$ we write:

\begin{equation}
	1 + \frac{z}{R} + \frac{1}{1 + \frac{z}{R}} \leqslant   1 + \frac{1}{1 - \varepsilon},
\end{equation}

Denote $\frac{z}{R} $ by $y.$
So
\begin{equation}
	1 + y+ \frac{1}{1 + y} \leqslant   1 + \frac{1}{1 - \varepsilon}.
\end{equation}

As we assumed, that $k > K_1$ in the beginning, and so  $\varepsilon < \frac{1}{1000},$ we can use the Taylor expansion for $\varepsilon$, and write that
\begin{equation}
	 1 + \frac{1}{1 - \varepsilon} = 1 + 1 + \varepsilon + \varepsilon^ 2 + \dots \leqslant 2 + 2 \varepsilon,
\end{equation}
and so:
\begin{equation}
	1 + y+ \frac{1}{1 + y} \leqslant   2 + 2 \varepsilon.
\end{equation}
As the denominator $1+y$ is larger than 0 (it is equal to $\frac{|AI_k|}{R}$), we can multiply by $1+y$, and obtain:

\begin{equation}
	(1 + y)^2+ 1 \leqslant   (1+y)(2 + 2 \varepsilon),
\end{equation}
so
\begin{equation}
	y^2 + 2y + 2 \leqslant 2 + 2y + 2\varepsilon y + 2 \varepsilon,
\end{equation}
so
\begin{equation}
	y^2 -2\varepsilon y \leqslant  2 \varepsilon,
\end{equation}
so
\begin{equation}
	(y-\varepsilon)^2  \leqslant 2 \varepsilon + \varepsilon ^ 2,
\end{equation}
so
\begin{equation}
	|y| \leqslant \sqrt{2 \varepsilon + \varepsilon ^ 2} + \varepsilon.
\end{equation}

Which is exactly the estimate on $\frac{|O'I_k|}{R},$ that we wanted to obtain:

\begin{equation}
	\frac{|O'I_k|}{R} \leqslant  \sqrt{2 \varepsilon + \varepsilon ^ 2} + \varepsilon.
	\label{eq_odashi}
\end{equation}

Now, combining inequalities (\ref{eq_oodash}) and (\ref{eq_odashi}), we get that if $k > K_1$ (and so $\varepsilon < \frac{1}{1000}$):

\begin{equation} 
\frac{|O_k I_k|}{R}
\leqslant \frac{ |O_k O'| + |O' I_k|}{R}  \leqslant 
 \arccos \left( 2 - \frac{1}{1-\varepsilon}\right) +
(\sqrt{2 \varepsilon + \varepsilon ^ 2} + \varepsilon).
\end{equation}

So 
\begin{equation}
	\forall \delta > 0 \enskip \exists K: \forall k > K \quad \frac{|I_k O_k|}{R_k} < \delta, 
\end{equation}
or in other words:
\begin{equation}
	\forall \delta > 0 \enskip \exists K: \forall k > K \quad |I_k O_k| < \delta |R_k|.
	\label{oi_upper}
\end{equation}

\textbf{2. Estimate on $|O_k O_{k+2}|$ from above.}
Now let's show that $O_{k+2}$ is close to $O_k.$
Denote $|O_k I_k|$ by $d.$ Let's instead of $R_k,$ again write $R$ for the radius of circumscribed sphere of $S_k.$ 
Consider such $K_2 > K_1,$ that $\forall k > K_2$ holds $\frac{d}{R} < \frac{1}{1000}.$

Consider further only these $k>K_2.$

We will reuse the names of the points from the previous chapter,
with only difference, that $\Gamma$ can be any full-dimension face of $S_{k+1},$ not only the one closest to $O_k$.
\begin{figure}[htp]
    \centering
    \includegraphics[width=14cm]{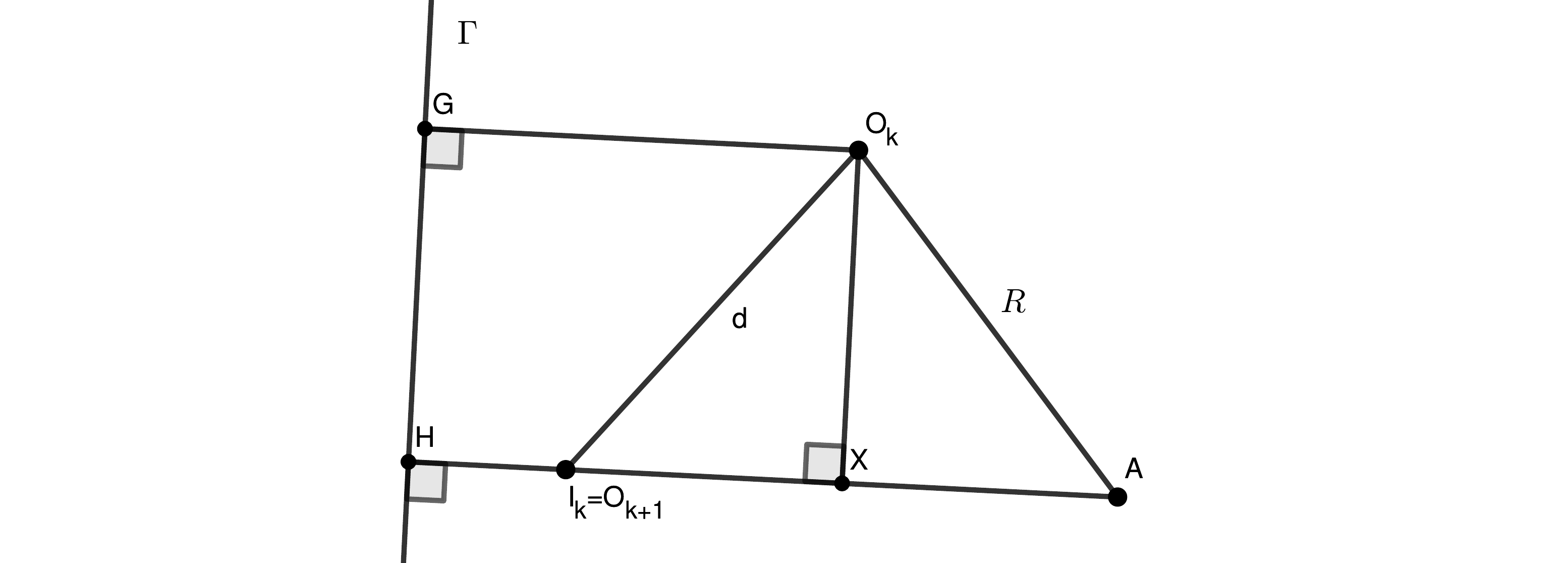}
    \caption{Estimate on $O_k O_{k+2}$ from above.}
    \label{fig:projection5}
\end{figure}

From (\ref{prod_property}) it follows:
\begin{equation}
	|AH| =|AI_k| + |I_k H| = |AI_k| + \frac{R^2}{|AI_k|}.
\end{equation}

From the triangle inequality for $\triangle I_k O_k A,$ we estimate $|AI_k|:$
\begin{equation}
	R-d \leqslant |AI_k| \leqslant R+d,
\end{equation}
so, as $f(x)=x+\frac{R^2}{x}$ is convex for $x>0,$ and attains maximal value on the interval at one of the intervals ends:

\begin{equation}
	|AH| \leqslant \max\left\{(R+d)+\frac{R^2}{R+d}, (R-d)+\frac{R^2}{R-d}\right\}.
	\label{AH_bound}
\end{equation}

Denote $\frac{d}{R}$ by $\delta.$ So,  (\ref{AH_bound}) is equivalent to:
\begin{equation}
	|AH| \leqslant R \max\left\{(1+\delta)+\frac{1}{1+\delta}, (1-\delta)+\frac{1}{1-\delta}\right\}.\end{equation}

Remember that as $k > K_2,$ we have $\delta < \frac{1}{1000},$ which leads to:
\begin{equation}
	\max\left\{1+\delta+\frac{1}{1+\delta}, 1-\delta+\frac{1}{1-\delta}\right\} \leqslant 2 + 2\delta^2.
\end{equation}

So 
\begin{equation}
	|AH| \leqslant R(2 + 2\delta^2).
	\label{eq_ah}
\end{equation}

On the other hand, since $|O_k X| \leqslant |O_k I_k| = d$ we get:
\begin{equation}
	|XA| =  \sqrt{R^2 - |O_k X|^2}
	\geqslant \sqrt{R^2 - d^2} = R \sqrt{1-\delta ^ 2}\geqslant R(1 - \delta^2).
	\label{eq_ax}
\end{equation}

Point $X$ lies inside the interval $[AH]$: since otherwise $d = |O_k O_{k+1}| \geqslant |AX| \geqslant R(1 - \delta^2).$
(Reader may notice that we have already shown in the previous chapter that $X$ lies on $[AH].$ We did, but only for one face $\Gamma$ of $S_{k+1}.$)
Now, combining inequalities (\ref{eq_ah}) and (\ref{eq_ax}) we get

\begin{equation}
	|O_kG| = |AH| - |AX| \leqslant R(2 + 2\delta^2 - (1-\delta^2)) = R(1+3\delta^2)
	=R+\frac{3d^2}{R}.
\end{equation}

In the Lemma \ref{lemma_container}, in (\ref{lower_bound_og}) we had a lower bound on $|O_k G|$, so together

\begin{equation}
R \leqslant |O_kG| \leqslant R + \frac{3d^2}{R}.
\label{eq_two_bounds}
\end{equation}

Starting from now let's again write $R_k,$ not $R.$

Let's look again at the whole simplex $S_{k+1}$ (we used to only look at one of its full-dimension faces, $\Gamma$).
Its radius of inscribed sphere equals $r_{k+1},$ and $I_{k+1}$ is its center of inscribed sphere.
In the Lemma \ref{lemma_container}, in (\ref{container_inequality}), we've shown that 

\begin{equation}
r_{k+1} \geqslant R_k.
\label{rkbig}
\end{equation}
Consider the smaller simplex $S'_{k+1},$ that is homothetic to $S_{k+1}$ with the coefficient $\frac{r_{k+1} - R_k}{r_{k+1}}$ (which is greater or equal than 0)
and center $I_{k+1}.$ Simplex $S'_{k+1}$ can be a single point. See Figure \ref{fig:smaller-simplex}.

\begin{figure}[htp]
    \centering
    \includegraphics[width=12cm]{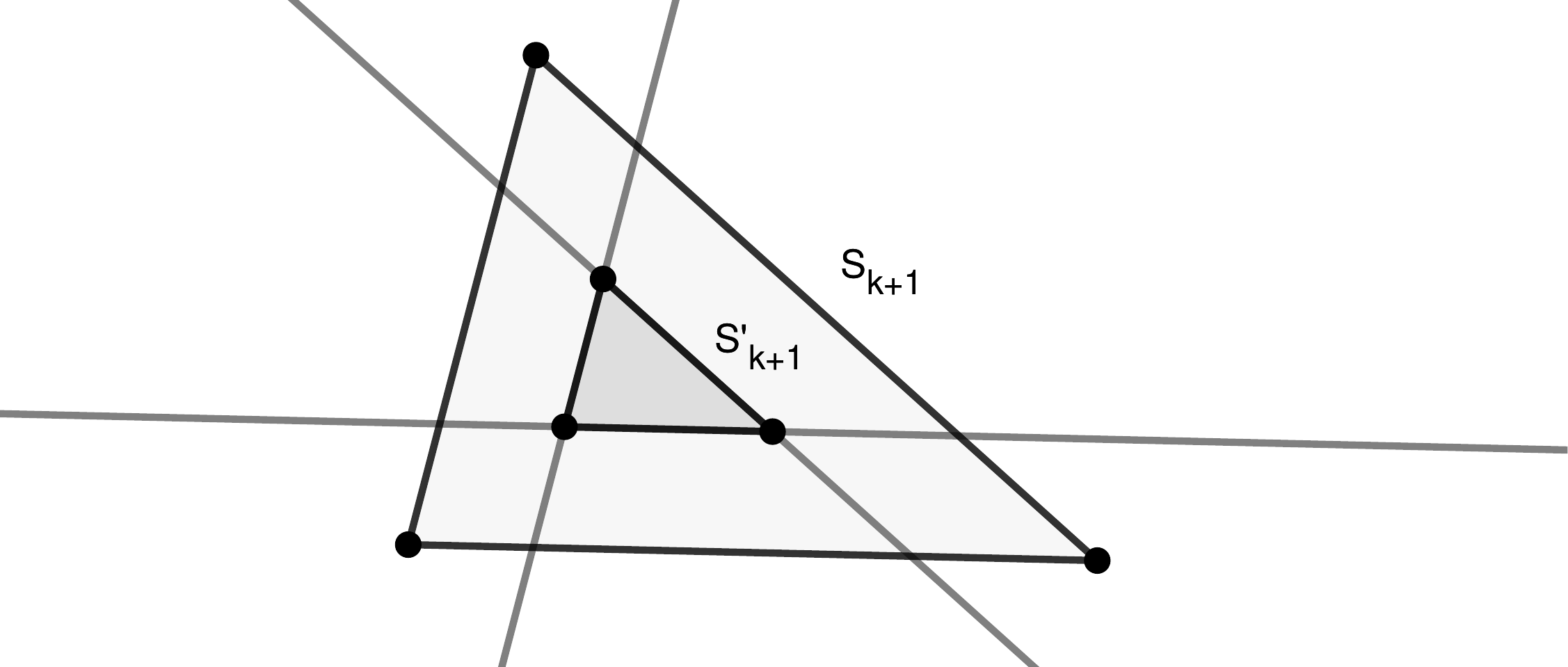}
    \caption{Simplices $S_{k+1}$ and $S'_{k+1}$.}
    \label{fig:smaller-simplex}
\end{figure}

The radius of the inscribed sphere of $S'_{k+1}$ is, by construction, $r_{k+1} -R_k$, which is the distance from
$I_{k+1}$ to any of the full-dimension faces of $S'_{k+1}.$
 So, the distance between the parallel full-dimension faces of $S_{k+1}$ and $S'_{k+1}$ equals
 $r_{k+1} - (r_{k+1} - R_k) = R_k.$

We now aim to show, that this tiny simplex $S'_{k+1}$ contains two points, mentioned in the beginning of the chapter: $O_k$ and $O_{k+2}.$ It is very simple with the point $O_{k+2},$ indeed, since $S_{k+2}= Root(S_{k+1}),$ from the Property \ref{prop_radii}, points $I_{k+1}$ and $O_{k+2}$ coincide:
\begin{equation}
	I_{k+1}=O_{k+2},
\end{equation}
and $I_{k+1}$ is the center of inscribed sphere of $S'_{k+1},$ so it lies inside $S'_{k+1}$.

Now fix some full-dimensional face of $S_{k+1},$ let's call it $\Gamma.$ It divides the space into two half-spaces,
let's denote one containing $S_{k+1}$ by $\Pi.$

Let the image of $\Gamma,$ after the homothety transforming $S_{k+1}$ into $S'_{k+1}$ be $\Gamma',$ and
the image of $\Pi$ be $\Pi'.$
Since $I_{k+1},$ the center of homothety lies in $\Pi,$ then $\Pi' \subset \Pi.$

From the Lemma \ref{lemma_container} the circumscribed sphere of $S_k$ lies inside $S_{k+1},$ and so does its center, $O_k.$
So $O_k \in \Pi.$
The distance from $O_k$ to $\Gamma$ is in the interval $[R_k;R_k+\frac{3d^2}{R_k}]$, see (\ref{eq_two_bounds}).
This means that since the distance between $\Gamma$ and $\Gamma'$ equals $R_k$,
that $O_k \in \Pi',$ and distance from $O_k$ to $\Gamma'$ is smaller or equal than $(R_k+\frac{3d^2}{R_k}) - R_k = \frac{3d^2}{R_k}.$

Repeating this argument for all full-dimension faces of $S_{k+1},$ we obtain that
$O_k$ lies inside $S'_{k+1},$ as $S'_{k+1}$ is the intersection of $n+1$ half-spaces in the family of $\Pi'.$

Now, as we've shown that $O_k, O_{k+2} \in S'_{k+1},$ let's prove that this simplex is indeed small. First, if $S'_{k+1}$ is a point, then $|O_k O_{k+2}|=0,$ and any upper bound holds. If $S'_{k+1}$ is not a point, let's continue. 

The distance from $O_k$ to all full-dimension faces of $S'_{k+1}$ is no larger than 
$\frac{3d^2}{R_k}.$
This means that the radius of the inscribed sphere of $S'_{k+1}$ is no larger than
$\frac{3d^2}{R_k}$
The simplices $S_{k+1}$ and $S'_{k+1}$ are homothetic, so if the radii of inscribed and circumscribed spheres of $S'_{k+1}$ are $r'_{k+1}$ and $R'_{k+1},$ then from (\ref{lower_bound_radius}):

\begin{equation}
	\frac{r'_{k+1}}{R'_{k+1}}=\frac{r_{k+1}}{R_{k+1}}\geqslant \rho(1-\varepsilon).
\end{equation}
So,
\begin{equation}
R'_{k+1} \leqslant \frac{r'_{k+1}}{\rho(1-\varepsilon)} \leqslant \frac{3d^2}{\rho (1-\varepsilon)R_k},
\end{equation}
which means that since both $O_k, O_{k+2} \in S'_{k+1}$ then
\begin{equation}
|O_k O_{k+2}| \leqslant 2R'_{k+1} \leqslant \frac{6d^2}{\rho(1-\varepsilon)R_k} =
\frac{6|O_k O_{k+1}|^2}{\rho(1-\varepsilon)R_k}, \text{ for } k > K_2.
\end{equation}

And since for $k > K_2$  holds $\varepsilon < \frac{1}{2},$ then:
\begin{equation}
	|O_k O_{k+2}| \leqslant \frac{12|O_k O_{k+1}|^2}{\rho R_k}, \text{ for } k>K_2.
	\label{oo_bound}
\end{equation}

\textbf{3. Sequence $\{O_{2k}, k \in \mathbb{N}\}$ is a Cauchy sequence.}
We have shown that points $O_{2k}$ and $O_{2k+2}$ are close to each other for sufficiently large $k.$ If we also can show, that $\sum\limits_{k=1}^{\infty} |O_{2k} O_{2k+2}| < \infty,$ this will lead to $|O_{2k}O_{2l}| \to 0, \enskip k,l \to \infty,$ meaning that $O_{2k}$ is a Cauchy sequence, and so, has a limit in $\mathbb{R}^n$

From Property (\ref{prop_radii}) and Corollary (\ref{corollary_limiting_radius}) we know 
\begin{equation}
R_{k+1} = R_k \cdot \frac{R_k}{r_k} \geqslant \frac{R_k}{\rho}.
\end{equation}

So, since we know from the Property \ref{rho_bound}, that $\rho \leqslant \frac{1}{n-1}$,
then $R_k$ grows fast, more precisely :

\begin{equation}
	R_k \geqslant R_1 \left(\frac{1}{\rho}\right)^{k-1}.
	\label{rk_grows}
\end{equation}

Let $c:=\frac{12}{\rho},$ and denote $d_k:=|O_k O_{k+1}|.$

Then for $k>K_2$ from (\ref{oo_bound}) we get:

\begin{equation}
	d_{k+1} = |O_{k+1}O_{k+2}| \leqslant |O_{k+1}O_{k}| +|O_kO_{k+2}| \leqslant d_k+ \frac{c d_k^2}{R_k}.
	\label{dk}
\end{equation}

Let $\varepsilon_2>0$ be a small real number, such that
\begin{equation}
\frac{(1 + c\varepsilon_2)^2}{ \frac{1}{\rho}} < \frac{1}{2}
\label{eps2def}
\end{equation}

From (\ref{oi_upper}) we have: for this $\varepsilon_2$ there exists such $K_3>K_2,$ that for all $k > K_3$ holds:
\begin{equation}
	\frac{d_k}{R_k} \leqslant \varepsilon_2,
	\label{dk2}
\end{equation}
so for $k>K_3$ the combination of (\ref{dk}) and (\ref{dk2}) gives:
\begin{equation}
d_{k+1}  \leqslant d_k+ \frac{c d_k^2}{R_k} \leqslant d_k + c\varepsilon_2 d_k = (1 + c\varepsilon_2) d_k,
\end{equation}

so for some large $C>0$ and for all $k$
\begin{equation}
	d_k \leqslant C(1 + c\varepsilon_2) ^ k.
	\label{dk_small}
\end{equation}

We can now continue the estimate on $|O_k O_{k+2}|,$ in (\ref{oo_bound}), for $k > K_2.$
First, equivalently to (\ref{oo_bound}):
\begin{equation}
	|O_k O_{k+2}| \leqslant \frac{c d_k^2}{R_k},
\end{equation}
then from (\ref{rk_grows}) for all $k$:
\begin{equation}
	\frac{c d_k^2}{R_k} \leqslant
	\frac{c d_k^2}{R_1\left( \frac{1}{\rho}\right)^{k-1}},
\end{equation}
then from (\ref{dk_small}), for all $k:$
\begin{equation}
	\frac{c d_k^2}{R_1\left( \frac{1}{\rho}\right)^{k-1}}
	\leqslant
	\frac{c\cdot C^2(1 + c\varepsilon_2)^{2k}}{R_1\left( \frac{1}{\rho}\right)^{k-1}} =
	\frac{c\cdot C^2}{\rho R_1} \left( \frac{(1 + c\varepsilon_2)^2}{ \frac{1}{\rho}} \right)^k.
\end{equation}

So, for $k>K_2$:
\begin{equation}
	|O_k O_{k+2}| \leqslant
	\frac{c\cdot C^2}{\rho R_1} \left( \frac{(1 + c\varepsilon_2)^2}{ \frac{1}{\rho}} \right)^k.
\end{equation}

And from (\ref{eps2def}), for $k>K_2$:
\begin{equation}
	|O_k O_{k+2}| \leqslant
	\frac{c\cdot C^2}{\rho R_1} \left( \frac{1}{2} \right)^k.
\end{equation}

The sum of the geometric progression with a ratio smaller than 1 is finite. So the sequence $\{O_{2k}\}$ is a Cauchy sequence, and so it has a limit in $\mathbb{R}^n.$

The same argument holds for $\{O_{2k+1}\},$ since it is the sequence of even order circumcenters, with the starting simplex $S_2$ instead of $S_1.$
\end{proof}

\bibliography{root} 

\begin{thebibliography}{1}

\bibitem{Fedja}
fedja (https://mathoverflow.net/users/1131/fedja), ``Tetrahedron insphere
  iteration.'' MathOverflow.
\newblock URL:https://mathoverflow.net/q/144581 (version: 2013-10-18).

\bibitem{Krizek}
J.~Brandts and M.~Křížek, {\em Duality of isosceles tetrahedra, 2019.}
\newblock J. Geom. (2019) 110:49, Springer Nature Switzerland AG, 2019.

\bibitem{Krizek2}
J.~Brandts and M.~Křížek, {\em Simplicial Vertex-Normal Duality with
  Applications to Well-Centered Simplices.}
\newblock Numerical Mathematics and Advanced Applications ENUMATH 2017. ENUMATH
  2017. Lecture Notes in Computational Science and Engineering, vol 126.
  Springer, 2019.

\end{thebibliography}
\bibliographystyle{ieeetr}

\end{document}